\numberwithin{equation}{section}
\numberwithin{figure}{section}
\theoremstyle{plain}
\newtheorem{Theorem}{Theorem}[section]
\newtheorem*{Theorem*}{Theorem}
\newtheorem{Lemma}[Theorem]{Lemma}
\newtheorem*{Lemma*}{Lemma}
\newtheorem{Proposition}[Theorem]{Proposition}
\newtheorem*{Proposition*}{Proposition}
\newtheorem{Corollary}[Theorem]{Corollary}
\theoremstyle{definition}
\newtheorem{Definition}[Theorem]{Definition}
\newtheorem*{Definition*}{Definition}
\newtheorem{Remark}[Theorem]{Remark}
\newcommand{\C}{\mathbb{C}}
\newcommand{\R}{\mathbb{R}}
\newcommand{\Z}{\mathbb{Z}}
\newcommand{\kk}{\Bbbk}
\newcommand{\id}{\mathrm{id}}
\newcommand{\Cdot}{\boldsymbol{\cdot}}
\newcommand{\Rep}{\mathrm{Rep}}
\newcommand{\Tilt}{\mathrm{Tilt}}
\newcommand{\im}{\mathrm{im}}
\providecommand{\abs}[1]{\lvert#1\rvert}
\title[Tensor ideals for quantum groups]{Tensor ideals for quantum groups\\via minimal tilting complexes}
\author{Jonathan Gruber}
\address{Department of Mathematics, National University of Singapore, Singapore}
\email{jgruber@nus.edu.sg}
\subjclass{20G42 (primary), 17B55, 18M15, 20G05 (secondary)}
\keywords{quantum group, tensor ideal, tilting module}
\date{\today}
\begin{document}

\begin{abstract}
	We use minimal tilting complexes to construct an explicit bijection between the set of thick tensor ideals with the two-out-of-three property in the category of finite-dimensional modules over a quantum group at a root of unity and the set of thick tensor ideals in the subcategory of tilting modules.
	We also explain why the analogous construction for rational representations of a reductive algebraic group over a field of positive characteristic does not give rise to a bijection.
\end{abstract}

\maketitle

\section*{Introduction}

One way of understanding the structure of a braided monoidal category $\mathcal{C}$ is via its thick tensor ideals, i.e.\ the sets of objects of $\mathcal{C}$ that are closed under direct sums, retracts and tensor products with arbitrary objects of $\mathcal{C}$.
When $\mathcal{C}$ is triangulated and the tensor product bifunctor is exact in both arguments then one can further impose that tensor ideals should be well-behaved with respect to the triangulated structure (in a suitable sense), and this leads to the rich theory of tensor triangular geometry developed by P.\ Balmer \cite{Balmerspectrum}.
When $\mathcal{C}$ is abelian, and the tensor product bifunctor is exact in both arguments, we can consider the set of thick tensor ideals $\mathcal{J}$ with the so-called \emph{two-out-of-three property}, that is, with the property that for any short exact sequence $0 \to A \to B \to C \to 0$ in $\mathcal{C}$ such that two of the three objects $A$, $B$ and $C$ belong to $\mathcal{J}$, the third one also belongs to $\mathcal{J}$.

In this note, we study thick tensor ideals with the two-out-of-three property in the category of modules over a quantum group at a root of unity.
Let $U = U_\zeta( \mathfrak{g} )$ be the quantum group at a primitive $\ell$-th root of unity $\zeta \in \C$ corresponding to a complex simple Lie algebra $\mathfrak{g}$, defined using divided powers as in \cite[Appendix H]{Jantzen}, and assume that $\ell$ is odd and strictly greater than the Coxeter number $h$ of $\mathfrak{g}$, and not divisible by $3$ if $\mathfrak{g}$ is of type $\mathrm{G}_2$.
Further, let $\Rep(U)$ be the category of finite-dimensional $U$-modules of type one and let $\Tilt(U)$ be the full subcategory of tilting $U$-modules, again as in \cite[Appendix H]{Jantzen}.
The thick tensor ideals in $\Tilt(U)$ have been classified by V.\ Ostrik in \cite{OstrikTensorIdeals} in terms of certain Kazhdan-Lusztig cells in the affine Weyl group of $U$.
Here, we provide an explicit bijection between the set of thick tensor ideals in $\Tilt(U)$ and the set of thick tensor ideals with the two-out-of-three property in $\Rep(U)$, using the theory of \emph{minimal tilting complexes}.
As explained in \cite[Subsection 2.2]{GruberMinimalTilting}, we can associate to every $U$-module $M$ a bounded minimal complex $C_\mathrm{min}(M)$ in $\Tilt(U)$ (unique up to isomorphism),
and we call $C_\mathrm{min}(M)$ the minimal tilting complex of $M$.
Given a thick tensor ideal $\mathcal{I}$ in $\Tilt(U)$, we define
\[ \langle \mathcal{I} \rangle \coloneqq \big\{ M \in \Rep(U) \mathop{\big|} \text{all terms of } C_\mathrm{min}(M) \text{ belong to } \mathcal{I} \big\} \]
and prove that $\langle \mathcal{I} \rangle$ is a thick tensor ideal in $\Rep(U)$ with the two-out-of-three property.
Our main result is as follows; see Theorem \ref{thm:bijection}.

\begin{Theorem*}
	The map $\mathcal{I} \mapsto \langle \mathcal{I} \rangle$ from the set of thick tensor ideals in $\Tilt(U)$ to the set of thick tensor ideals in $\Rep(U)$ with the two-out-of-three property is a bijection, with inverse map $\mathcal{J} \mapsto \mathcal{J} \cap \Tilt(U)$.
\end{Theorem*}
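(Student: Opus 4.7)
My plan is to prove two equalities: (a) $\langle \mathcal{I} \rangle \cap \Tilt(U) = \mathcal{I}$ for every thick tensor ideal $\mathcal{I}$ in $\Tilt(U)$, and (b) $\langle \mathcal{J} \cap \Tilt(U) \rangle = \mathcal{J}$ for every thick tensor ideal $\mathcal{J}$ in $\Rep(U)$ with the two-out-of-three property. I take as given the assertion announced in the introduction that $\langle \mathcal{I} \rangle$ is itself a thick tensor ideal with the two-out-of-three property (which rests on $C_\mathrm{min}$ respecting direct sums and retracts, on $C_\mathrm{min}(M \otimes N)$ being a homotopy retract of $C_\mathrm{min}(M) \otimes C_\mathrm{min}(N)$, and on the mapping-cone description of triangles arising from short exact sequences via the equivalence $K^b(\Tilt(U)) \simeq D^b(\Rep(U))$), together with the routine check that $\mathcal{J} \cap \Tilt(U)$ is a thick tensor ideal in $\Tilt(U)$. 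Part (a) is then immediate: for $T \in \Tilt(U)$ we have $C_\mathrm{min}(T) = T$ concentrated in degree $0$, so $T \in \langle \mathcal{I} \rangle$ iff $T \in \mathcal{I}$.

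For the inclusion $\langle \mathcal{J} \cap \Tilt(U) \rangle \subseteq \mathcal{J}$ in (b), I assume every term $C^i$ of $C = C_\mathrm{min}(M)$ lies in $\mathcal{J}$ and extract from $C$ the standard short exact sequences
\[ 0 \to Z^i \to C^i \to B^{i+1} \to 0 \qquad \text{and} \qquad 0 \to B^i \to Z^i \to H^i(C) \to 0, \]
where $Z^i = \ker d^i$ and $B^i = \im d^{i-1}$. Since $H^i(C) = 0$ for $i \neq 0$ we have $Z^i = B^i$ for such $i$; starting from the extremal degrees (where $B^i = Z^i = 0$) and inducting inwards, the two-out-of-three property places every $B^i$ and every $Z^i$ in $\mathcal{J}$. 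The final sequence $0 \to B^0 \to Z^0 \to M \to 0$ with $B^0, Z^0 \in \mathcal{J}$ then yields $M \in \mathcal{J}$.

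The reverse inclusion $\mathcal{J} \subseteq \langle \mathcal{J} \cap \Tilt(U) \rangle$ is where the main obstacle lies. Given $M \in \mathcal{J}$, I need to show that every indecomposable summand $T(\mu)$ of every term $C_\mathrm{min}(M)^i$ lies in $\mathcal{J}$. Since $\mathcal{J}$ is closed under tensoring with arbitrary objects of $\Rep(U)$ and under retracts, the cleanest route is to exhibit each such $T(\mu)$ as a direct summand of $M \otimes V$ for some $V \in \Rep(U)$; this ought to follow from a characterization of the indecomposable summands of $C_\mathrm{min}(M)^i$ in terms of non-vanishing of the Hom-spaces $\Hom_{D^b(\Rep(U))}(T(\mu), M[i])$, combined with rigidity of $\Rep(U)$ to convert such derived-category morphisms into direct-summand relations after tensoring with duals. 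Failing that, I would induct on the width of $C_\mathrm{min}(M)$: given an appropriate tilting approximation $0 \to N \to T \to M \to 0$ with $T$ tilting, the associated exact triangle in $K^b(\Tilt(U))$ controls $C_\mathrm{min}(M)$ in terms of $C_\mathrm{min}(N)$ (which has strictly smaller width), letting me propagate $\mathcal{J}$-membership via two-out-of-three.
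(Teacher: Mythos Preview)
Your part (a) and the inclusion $\langle \mathcal{J} \cap \Tilt(U) \rangle \subseteq \mathcal{J}$ in (b) are correct and match the paper's Lemmas~\ref{lem:extensionoftensoridealsection} and~\ref{lem:tensoridealgenerated}. The real content lies in the reverse inclusion $\mathcal{J} \subseteq \langle \mathcal{J} \cap \Tilt(U) \rangle$, and here both of your proposed routes have a genuine gap.

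Your first route asks that every indecomposable summand $T(\mu)$ of some $C_\mathrm{min}(M)^i$ be a retract of $M \otimes V$ for some $V$. But a non-vanishing $\Hom_{D^b(\Rep(U))}(T(\mu),M[i])$ gives only a non-zero map, not a split one, and rigidity converts it into a non-zero element of $\Hom_{D^b}(\mathbf{1}, T(\mu)^* \otimes M[i])$, which again carries no direct-summand information. Your second route, inducting on the length of $C_\mathrm{min}(M)$ via a short exact sequence $0 \to N \to T \to M \to 0$ with $T$ tilting, needs $T \in \mathcal{J}$ in order to push $N$ into $\mathcal{J}$ by two-out-of-three; but $T \in \mathcal{J}$ is exactly an instance of what you are trying to prove, so the induction does not get started.

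That this inclusion cannot be obtained by formal manipulations is confirmed by Section~\ref{sec:modular}: the identical construction in $\Rep(G)$ for a reductive group in positive characteristic satisfies all of the formal properties you invoke, yet the map $\mathcal{I} \mapsto \langle \mathcal{I} \rangle$ is \emph{not} surjective there. The paper's proof of the hard inclusion instead goes through Proposition~\ref{prop:intersectionofprimeideals} (every $\mathcal{J}$ is an intersection of prime ideals, after Balmer) together with the external input of Theorem~8.2.1 in \cite{BoeKujawaNakano}, which says that in the quantum case every prime thick tensor ideal with the 2/3-property is generated by a single indecomposable tilting module; this is what forces each prime $\mathcal{P}$ to equal $\langle \mathcal{P} \cap \Tilt(U) \rangle$, and then Lemma~\ref{lem:intersection} passes to intersections. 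Any successful argument must at some point invoke a result of this kind that is specific to the quantum situation.
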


Thick tensor ideals in $\Rep(U)$ have also been studied by B.\ Boe, J.\ Kujawa and D.\ Nakano in \cite{BoeKujawaNakano} via the geometry of the nilpotent cone of $\mathfrak{g}$, and we partially rely on their results.

Our construction also makes perfect sense for the category $\Rep(G)$ of finite-dimensional rational representations of a simply-connected simple algebraic group $G$ over an algebraically closed field of positive characteristic $p \geq h$, but the above theorem fails in this setting.
In Section \ref{sec:modular}, we show that the Frobenius twist of the Steinberg module for $G$ generates a thick tensor ideal in $\Rep(G)$ with the two-out-of-three property which is not of the form $\langle \mathcal{I} \rangle$, for any thick tensor ideal $\mathcal{I}$ in the category $\Tilt(G)$ of tilting modules in $\Rep(G)$.

\subsection*{Acknowledgements}

The author would like to thank Stephen Donkin, Thorge Jensen, Daniel Nakano and Donna Testerman for helpful conversations and comments.
This work was funded by the Swiss National Science Foundation under the grants FNS 200020\_175571 and FNS 200020\_207730 and by the Singapore MOE grant R-146-000-294-133. 

\section{Minimal tilting complexes} \label{sec:introduction}

Let us start by recalling some important facts about representations of quantum groups and minimal tilting complexes.
We keep the notation from the introduction; in particular $U = U_\zeta(\mathfrak{g})$ is a quantum group at a primitive $\ell$-th root of unity $\zeta \in \C$  as in \cite[Appendix H]{Jantzen}, where $\ell$ is odd (and not divisible by $3$ if $\mathfrak{g}$ is of type $\mathrm{G}_2$).
We write $\Rep(U)$ for the category of finite-dimensional $U$-modules of type one.
Note that $\Rep(U)$ is a rigid monoidal category because $U$ is a Hopf algebra and that $\Rep(U)$ admits a braiding by \cite[Chapter 32]{LusztigQuantumGroups}.
In the following, we simply refer to the objects of $\Rep(U)$ as $U$-modules.
The category $\Rep(U)$ is also a highest weight category with weight poset $(X^+,\leq)$, where $X^+$ denotes the set of dominant weights for $\mathfrak{g}$ (with respect to a fixed base of the root system of $\mathfrak{g}$) and $\leq$ is the usual dominance order.
We write $\Tilt(U)$ for the full subcategory of tilting modules in $\Rep(U)$; see Section 1 and Subsection 4.3 in \cite{GruberMinimalTilting} for more details.
As explained in \cite[Subsection 2.2]{GruberMinimalTilting}, we can assign to every $U$-module $M$ a \emph{minimal tilting complex} $C_\mathrm{min}(M)$; it is the unique minimal bounded complex in $\Tilt(U)$ whose cohomology groups are given by
\[ H^i\big( C_\mathrm{min}(M) \big) \cong \begin{cases} M & \text{if } i=0 , \\ 0 & \text{otherwise} \end{cases} \]
for $i \in \Z$.
(See \cite[Definition 2.2]{GruberMinimalTilting} for the definition of a minimal complex.)
Below, we list some key properties of minimal tilting complexes.

\begin{Lemma} \label{lem:directsum}
	Let $M$ and $N$ be $U$-modules.
	Then we have $C_\mathrm{min}(M \oplus N) \cong C_\mathrm{min}(M) \oplus C_\mathrm{min}(N)$.
\end{Lemma}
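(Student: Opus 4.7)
The plan is to exploit the uniqueness of the minimal tilting complex (stated just above the lemma): the minimal tilting complex of a module $L$ is characterized up to isomorphism as the unique minimal bounded complex in $\Tilt(U)$ whose cohomology is concentrated in degree zero and equal to $L$ there. So it suffices to verify that $C_\mathrm{min}(M) \oplus C_\mathrm{min}(N)$ satisfies these two properties with $L = M \oplus N$.

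First I would check the cohomology. Since cohomology commutes with finite direct sums of complexes, we have
\[ H^i\bigl( C_\mathrm{min}(M) \oplus C_\mathrm{min}(N) \bigr) \cong H^i\bigl( C_\mathrm{min}(M) \bigr) \oplus H^i\bigl( C_\mathrm{min}(N) \bigr), \]
which equals $M \oplus N$ for $i=0$ and vanishes otherwise, by the defining property of the two summands. The direct sum is clearly still a bounded complex in $\Tilt(U)$, as $\Tilt(U)$ is closed under finite direct sums.

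Second, I would verify minimality. By \cite[Definition 2.2]{GruberMinimalTilting}, a bounded complex in $\Tilt(U)$ is minimal when each of its differentials is a radical morphism in the additive category $\Tilt(U)$ (i.e.\ factors through no isomorphism on any indecomposable direct summand). The differentials of $C_\mathrm{min}(M) \oplus C_\mathrm{min}(N)$ are block-diagonal with diagonal blocks coming from $C_\mathrm{min}(M)$ and $C_\mathrm{min}(N)$, both of which are radical by assumption; a block-diagonal morphism between direct sums is radical if and only if each diagonal block is radical, so the differentials of the direct-sum complex are radical as well.

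Combining the two points, $C_\mathrm{min}(M) \oplus C_\mathrm{min}(N)$ is a minimal bounded complex in $\Tilt(U)$ with the correct cohomology, so uniqueness of the minimal tilting complex yields the claimed isomorphism. The only step that requires any thought is the behaviour of the radical under direct sums, and this is a standard fact about Krull--Schmidt additive categories that I would simply invoke.
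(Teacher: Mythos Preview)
Your argument is correct. The paper itself does not prove this lemma directly but simply cites part~(1) of Lemma~2.12 in \cite{GruberMinimalTilting}; your verification via the uniqueness characterization of the minimal tilting complex---checking boundedness, cohomology, and that the block-diagonal differentials remain radical---is the natural proof and is essentially what that reference establishes.
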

\begin{proof}
	See part (1) of Lemma 2.12 in \cite{GruberMinimalTilting}.
\end{proof}

For $U$-modules $M$ and $N$, let us write $M \stackrel{\oplus}{\subseteq} N$ if $M$ admits a split embedding into $N$, and denote by $C_\mathrm{min}(M)_i$ the term in homological degree $i \in \Z$ of $C_\mathrm{min}(M)$.

\begin{Lemma} \label{lem:shortexactsequence}
	Let $0 \to A \to B \to C \to 0$ be a short exact sequence of $U$-modules. Then
	\begin{align*}
	C_\mathrm{min}(A)_i & \stackrel{\oplus}{\subseteq} C_\mathrm{min}(B)_i \oplus C_\mathrm{min}(C)_{i-1} , \\
	C_\mathrm{min}(B)_i & \stackrel{\oplus}{\subseteq} C_\mathrm{min}(A)_i \oplus C_\mathrm{min}(C)_i , \\
	C_\mathrm{min}(C)_i & \stackrel{\oplus}{\subseteq} C_\mathrm{min}(A)_{i+1} \oplus C_\mathrm{min}(B)_i
	\end{align*}
	for all $i\in\Z$.
\end{Lemma}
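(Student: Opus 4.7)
The plan is to view the short exact sequence as a distinguished triangle in the bounded derived category $D^b(\Rep(U))$ and translate it into a distinguished triangle in the homotopy category $K^b(\Tilt(U))$, using that $K^b(\Tilt(U))$ is equivalent to $D^b(\Rep(U))$ (via the functor sending each $U$-module to its minimal tilting complex). The three claimed inclusions should then fall out as the three rotations of the same triangle.

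More precisely, I would lift the morphism $A \to B$ to a chain map $f \colon C_\mathrm{min}(A) \to C_\mathrm{min}(B)$; this is possible because both complexes compute their zeroth cohomology as $A$ and $B$ respectively, and chain maps between bounded complexes of tilting modules descend to arbitrary morphisms on cohomology. The mapping cone $\mathrm{Cone}(f)$ is a bounded complex in $\Tilt(U)$, and the long exact cohomology sequence of the triangle $C_\mathrm{min}(A) \to C_\mathrm{min}(B) \to \mathrm{Cone}(f) \to$ identifies the cohomology of $\mathrm{Cone}(f)$ with $C$ concentrated in degree $0$. Hence $\mathrm{Cone}(f)$ is quasi-isomorphic, and therefore homotopy equivalent, to $C_\mathrm{min}(C)$.

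The essential input is the following structural fact from \cite{GruberMinimalTilting}: any bounded complex of tilting modules that is homotopy equivalent to a minimal complex $C_\mathrm{min}(C)$ decomposes (in the Krull-Schmidt additive category of bounded complexes in $\Tilt(U)$) as $C_\mathrm{min}(C) \oplus K$ with $K$ a null-homotopic, hence contractible, bounded complex in $\Tilt(U)$. Applying this to $\mathrm{Cone}(f)$ and reading off the degree $i$ component, we obtain
\[ C_\mathrm{min}(C)_i \stackrel{\oplus}{\subseteq} \mathrm{Cone}(f)_i = C_\mathrm{min}(A)_{i+1} \oplus C_\mathrm{min}(B)_i, \]
which is the third inclusion.

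The other two inclusions follow by rotating the distinguished triangle. Lifting the connecting morphism $C \to A[1]$ to a chain map $C_\mathrm{min}(C) \to C_\mathrm{min}(A)[1]$ (equivalently $C_\mathrm{min}(C)[-1] \to C_\mathrm{min}(A)$) and arguing as above with its mapping cone, we find that a complex of the form $C_\mathrm{min}(A) \oplus C_\mathrm{min}(C)$ (in degree $i$) contains $C_\mathrm{min}(B)_i$ as a direct summand, yielding the second inclusion. Rotating once more (lift $B \to C$ to $C_\mathrm{min}(B) \to C_\mathrm{min}(C)$ with cone homotopy equivalent to $C_\mathrm{min}(A)[1]$) gives the first inclusion after reindexing $i+1 \mapsto i$. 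The only real obstacle is invoking the Krull-Schmidt decomposition of bounded complexes of tilting modules into their minimal part plus a contractible summand, but this is precisely the content of the general theory of minimal tilting complexes recalled from \cite{GruberMinimalTilting}; everything else is a mechanical unwinding of cone constructions and rotations.
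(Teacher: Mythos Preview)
Your argument is correct and is precisely the standard argument underlying the cited Lemma~2.17 in \cite{GruberMinimalTilting}: realize the short exact sequence as a distinguished triangle in $K^b(\Tilt(U))\simeq D^b(\Rep(U))$, take cones of the three lifted maps, and use that every bounded complex of tilting modules splits as its minimal complex plus a contractible summand. The paper simply cites that lemma rather than spelling this out, so your proposal is essentially an unpacking of the reference rather than a different approach.
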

\begin{proof}
	This follows from Lemma 2.17 in \cite{GruberMinimalTilting}.
\end{proof}

Given two complexes $X = ( \cdots \to X_i \xrightarrow{d_i} X_{i+1} \to \cdots )$ and $Y = ( \cdots \to Y_i \xrightarrow{d_i^\prime} Y_{i+1} \to \cdots )$ of $U$-modules, we define the tensor product complex $X \otimes Y$ with terms
\[ (X \otimes Y)_i = \bigoplus_{j+k=i} X_j \otimes Y_k \]
for $i \in \Z$ and differential defined on $X_j \otimes Y_k$ via $d_j \otimes \id_{Y_k} + (-1)^j \cdot \id_{X_j} \otimes d^\prime_k$.
The cohomology groups of $X \otimes Y$ can be computed via the Künneth-formula (see for instance \cite[Theorem 4.1]{KunnethFormula}):
\[ H^i(X \otimes Y) = \bigoplus_{j+k=i} H^j(X) \otimes H^k(Y) \]
%

\begin{Lemma} \label{lem:tensorproduct}
	Let $M$ and $N$ be $U$-modules.
	Then there is a split monomorphism
	\[ C_\mathrm{min}(M \otimes N) \longrightarrow C_\mathrm{min}(M) \otimes C_\mathrm{min}(N) \]
	in the category of complexes in $\Tilt(U)$.
\end{Lemma}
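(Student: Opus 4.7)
The plan is to exhibit $C_\mathrm{min}(M \otimes N)$ as a direct summand of $C_\mathrm{min}(M) \otimes C_\mathrm{min}(N)$ by using the universal property of minimal tilting complexes together with a standard decomposition of bounded complexes in $\Tilt(U)$ into a minimal part and a contractible part.

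First, I would check that $C_\mathrm{min}(M) \otimes C_\mathrm{min}(N)$ is in fact a bounded complex in $\Tilt(U)$: each term $\bigoplus_{j+k=i} C_\mathrm{min}(M)_j \otimes C_\mathrm{min}(N)_k$ is a finite direct sum of tensor products of tilting $U$-modules, and tilting modules in $\Rep(U)$ are closed under tensor products. Combining the K\"unneth formula stated just above the lemma with the defining property that $C_\mathrm{min}(M)$ and $C_\mathrm{min}(N)$ have cohomology equal to $M$ and $N$ concentrated in degree zero, we find that the cohomology of $C_\mathrm{min}(M) \otimes C_\mathrm{min}(N)$ is $M \otimes N$ in degree zero and vanishes in all other degrees.

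The key ingredient is then the following structural fact, which should be available from Section~2 of \cite{GruberMinimalTilting}: every bounded complex $X$ in $\Tilt(U)$ admits a decomposition $X \cong X_\mathrm{min} \oplus X_\mathrm{ctr}$, where $X_\mathrm{min}$ is minimal in the sense of \cite[Definition 2.2]{GruberMinimalTilting} and $X_\mathrm{ctr}$ is a finite direct sum of two-term complexes of the form $T \xrightarrow{\id} T$, in particular contractible and hence acyclic. Applying this to $X = C_\mathrm{min}(M) \otimes C_\mathrm{min}(N)$, the summand $X_\mathrm{min}$ inherits the cohomology of $X$ and is thus a minimal bounded complex in $\Tilt(U)$ with cohomology $M \otimes N$ concentrated in degree zero. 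By the uniqueness built into the definition of the minimal tilting complex, $X_\mathrm{min} \cong C_\mathrm{min}(M \otimes N)$, and the split inclusion of this direct summand into $X$ is the required split monomorphism.

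The main obstacle I anticipate is verifying that the decomposition of bounded complexes in $\Tilt(U)$ into a minimal and a contractible part, together with the corresponding uniqueness of the minimal part given its cohomology, is available in precisely the form used above. Both are fairly routine consequences of the Krull-Schmidt property of $\Tilt(U)$ and iterated splitting off of identity summands $T \xrightarrow{\id} T$ from the differentials, and they should already be established (or implicit) in \cite{GruberMinimalTilting}; if a piece is missing, it is a short technical lemma rather than a genuine obstacle. Everything else in the argument is formal.
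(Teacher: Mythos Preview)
Your proposal is correct and follows essentially the same approach as the paper. The paper computes the cohomology of $C_\mathrm{min}(M)\otimes C_\mathrm{min}(N)$ via K\"unneth and then invokes part~(2) of Lemma~2.12 in \cite{GruberMinimalTilting}, which packages precisely the ``minimal plus contractible'' decomposition and uniqueness statement you spell out; so your anticipated obstacle is indeed already handled by that reference.
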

\begin{proof}
	By the Künneth formula, the tensor product complex satisfies
	\[ H^i\big( C_\mathrm{min}(M) \otimes C_\mathrm{min}(N) \big) \cong \begin{cases} M \otimes N & \text{if } i=0 , \\ 0 & \text{otherwise} \end{cases} \]
	for $i \in \Z$, whence we have $C_\mathrm{min}(M) \otimes C_\mathrm{min}(N) \cong C_\mathrm{min}(M \otimes N)$ in $D^b\big( \Rep(U) \big)$, the bounded derived category of $\Rep(U)$.
	By part (2) of Lemma 2.12 in \cite{GruberMinimalTilting}, this implies that $C_\mathrm{min}(M \otimes N)$ is the minimal complex of $C_\mathrm{min}(M) \otimes C_\mathrm{min}(N)$ and that there is a split monomorphism as in the statement of the lemma.
\end{proof}

\section{Tensor ideals for quantum groups} \label{sec:tensoridealsquantum}

Recall that a thick tensor ideal in $\Rep(U)$ is a set $\mathcal{J}$ of $U$-modules that is closed under direct sums, retracts and tensor products with arbitrary $U$-modules, that is, for $U$-modules $M$ and $N$ and $X \cong M \oplus N$, we have $X \in \mathcal{J}$ if and only if $M \in \mathcal{J}$ and $N \in \mathcal{J}$, and if $M \in \mathcal{J}$ then $M \otimes N \in \mathcal{J}$.
Thick tensor ideals in $\Tilt(U)$ are defined analogously.
We say that a thick tensor ideal $\mathcal{J}$ in $\Rep(U)$ has the \emph{2/3-property} (or \emph{two-out-of-three property}) if for any short exact sequence
	$ 0 \to A \to B \to C \to 0 $
	of $U$-modules such that two of the $U$-modules $A$, $B$ and $C$ belong to $\mathcal{J}$, the third also belongs to $\mathcal{J}$.

\begin{Remark}
	In \cite{BoeKujawaNakano}, the term \emph{thick tensor ideal} in $\Rep(U)$ is used for what we call \emph{thick tensor ideal with the 2/3-property}.
\end{Remark}

\begin{Definition}
	For any thick tensor ideal $\mathcal{I}$ in $\mathrm{Tilt}(U)$, we define a set of $U$-modules by
	\[ \langle \mathcal{I} \rangle \coloneqq \big\{ M \mathop{\big|} M \text{ is a } U \text{-module and all terms of } C_\mathrm{min}(M) \text{ belong to } \mathcal{I} \big\} . \]
\end{Definition}

\begin{Lemma} \label{lem:twooutofthreeproperty}
	Let $\mathcal{I}$ be a thick tensor ideal in $\mathrm{Tilt}(U)$.
	Then $\langle \mathcal{I} \rangle$ is a thick tensor ideal in $\Rep(U)$ and $\langle \mathcal{I} \rangle$ has the 2/3-property.
\end{Lemma}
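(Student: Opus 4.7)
The plan is to verify the three defining properties of a thick tensor ideal plus the 2/3-property in sequence, each by a direct application of one of the lemmas on minimal tilting complexes established earlier in the section.

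First I would handle closure under direct sums and retracts. Let $M$ and $N$ be $U$-modules. By Lemma \ref{lem:directsum}, we have $C_\mathrm{min}(M \oplus N)_i \cong C_\mathrm{min}(M)_i \oplus C_\mathrm{min}(N)_i$ for every $i \in \Z$. Since $\mathcal{I}$ is closed under direct sums and retracts in $\Tilt(U)$, this immediately gives that $M \oplus N \in \langle \mathcal{I} \rangle$ if and only if both $M$ and $N$ belong to $\langle \mathcal{I} \rangle$, covering both closure conditions at once.

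Next I would handle closure under tensor products. Suppose $M \in \langle \mathcal{I} \rangle$ and let $N$ be an arbitrary $U$-module. By Lemma \ref{lem:tensorproduct}, $C_\mathrm{min}(M \otimes N)$ is a direct summand of $C_\mathrm{min}(M) \otimes C_\mathrm{min}(N)$ in the category of complexes of tilting modules, so in particular
\[ C_\mathrm{min}(M \otimes N)_i \stackrel{\oplus}{\subseteq} \bigoplus_{j+k=i} C_\mathrm{min}(M)_j \otimes C_\mathrm{min}(N)_k \]
for each $i \in \Z$. Each summand on the right belongs to $\mathcal{I}$ because $C_\mathrm{min}(M)_j \in \mathcal{I}$ and $\mathcal{I}$ is a tensor ideal in $\Tilt(U)$, and then closure of $\mathcal{I}$ under direct sums and retracts forces $C_\mathrm{min}(M \otimes N)_i \in \mathcal{I}$ as well.

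Finally, the 2/3-property follows from Lemma \ref{lem:shortexactsequence}: given a short exact sequence $0 \to A \to B \to C \to 0$ with two of $A,B,C$ in $\langle \mathcal{I} \rangle$, the appropriate one of the three split inclusions expresses the $i$-th term of the minimal tilting complex of the remaining module as a summand of a direct sum of terms from the other two, all of which lie in $\mathcal{I}$; closure of $\mathcal{I}$ under direct sums and retracts then places the third module in $\langle \mathcal{I} \rangle$.

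None of these steps presents a serious obstacle, since the nontrivial content has been pushed into Lemmas \ref{lem:directsum}, \ref{lem:shortexactsequence} and \ref{lem:tensorproduct}; the only thing to be careful about is to ensure that in each step one appeals to the correct closure property of $\mathcal{I}$ (direct sum, retract, or tensor product with tilting modules) and that one quotes the right one of the three split inclusions of Lemma \ref{lem:shortexactsequence} in each of the three cases of the 2/3-property.
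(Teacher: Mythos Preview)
Your proposal is correct and follows essentially the same route as the paper's own proof: closure under sums and retracts via Lemma~\ref{lem:directsum}, closure under tensor products via Lemma~\ref{lem:tensorproduct} together with the tensor-ideal and retract-closure properties of $\mathcal{I}$, and the 2/3-property via Lemma~\ref{lem:shortexactsequence}. The only difference is that you spell out the degree-$i$ term of the tensor product complex explicitly, whereas the paper argues at the level of the whole complex; this is purely cosmetic.
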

\begin{proof}
	First note that $\langle \mathcal{I} \rangle$ is closed under direct sums and retracts because
	\[ C_\mathrm{min}( M_1 \oplus M_2 ) = C_\mathrm{min}(M_1) \oplus C_\mathrm{min}(M_2) \]
	for all $U$-modules $M_1$ and $M_2$, by  Lemma \ref{lem:directsum}.
	If $M_2\in \langle \mathcal{I} \rangle$ then all terms of the tensor product complex $C_\mathrm{min}(M_1) \otimes C_\mathrm{min}(M_2)$ belong to $\mathcal{I}$ because $\mathcal{I}$ is a tensor ideal.
	As $C_\mathrm{min}(M_1 \otimes M_2)$ is a direct summand of $C_\mathrm{min}(M_1) \otimes C_\mathrm{min}(M_2)$ in the category of complexes in $\Tilt(U)$ by Lemma \ref{lem:tensorproduct} and as $\mathcal{I}$ is closed under retracts, we conclude that $M_1 \otimes M_2 \in \langle \mathcal{I} \rangle$.
	Finally, for a short exact sequence
	\[ 0 \to A \to B \to C \to 0 \]
	of $U$-modules, we have
	\begin{align*}
	C_\mathrm{min}(A)_i & \stackrel{\oplus}{\subseteq} C_\mathrm{min}(B)_i \oplus C_\mathrm{min}(C)_{i-1} , \\
	C_\mathrm{min}(B)_i & \stackrel{\oplus}{\subseteq} C_\mathrm{min}(A)_i \oplus C_\mathrm{min}(C)_i , \\
	C_\mathrm{min}(C)_i & \stackrel{\oplus}{\subseteq} C_\mathrm{min}(A)_{i+1} \oplus C_\mathrm{min}(B)_i
	\end{align*}
	for all $i\in\Z$, by Lemma \ref{lem:shortexactsequence}.
	As $\mathcal{I}$ is closed under retracts, we conclude that $\langle \mathcal{I} \rangle$ has the 2/3-property.
\end{proof}

The following Lemma justifies the notation $\langle \mathcal{I} \rangle$.

\begin{Lemma} \label{lem:tensoridealgenerated}
	Let $\mathcal{I}$ be a thick tensor ideal in $\mathrm{Tilt}(U)$.
	Then $\langle \mathcal{I} \rangle$ is the smallest thick tensor ideal with the 2/3-property in $\Rep(U)$ that contains $\mathcal{I}$.
\end{Lemma}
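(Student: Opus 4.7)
The plan is to prove two things: first that $\langle\mathcal{I}\rangle$ is a thick tensor ideal with the 2/3-property containing $\mathcal{I}$, and second that it is contained in every such ideal $\mathcal{J}$. The first half is almost done by Lemma \ref{lem:twooutofthreeproperty}, so all that is new is $\mathcal{I}\subseteq\langle\mathcal{I}\rangle$. This is immediate: for any $T\in\mathcal{I}$, the single-term complex with $T$ in degree zero is a bounded minimal complex in $\Tilt(U)$ with cohomology $T$ concentrated at degree $0$, and by uniqueness of the minimal tilting complex this \emph{is} $C_\mathrm{min}(T)$, whose only nonzero term $T$ lies in $\mathcal{I}$.

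For the minimality, I would fix an arbitrary thick tensor ideal $\mathcal{J}$ in $\Rep(U)$ with the 2/3-property and $\mathcal{I}\subseteq\mathcal{J}$, take some $M\in\langle\mathcal{I}\rangle$, and aim to show $M\in\mathcal{J}$. Writing $T_\bullet = C_\mathrm{min}(M)$, the terms $T_i$ lie in $\mathcal{I}\subseteq\mathcal{J}$, and the cohomology vanishes outside degree zero where it equals $M$. Let $Z_i = \ker(d_i)$ and $B_i = \im(d_{i-1})$; vanishing of $H^i$ for $i\neq 0$ gives $Z_i = B_i$, whence short exact sequences
\[ 0\to B_i\to T_i\to B_{i+1}\to 0 \]
for every $i\neq 0$, alongside $0\to B_0\to Z_0\to M\to 0$ and $0\to Z_0\to T_0\to B_1\to 0$ at degree zero.

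The key step is then a two-sided induction on the length of $T_\bullet$. Let $[a,b]$ bound the support of $T_\bullet$. At the top degree, $B_b=T_b\in\mathcal{J}$, and descending through the above short exact sequences with the 2/3-property yields $B_i\in\mathcal{J}$ for $0<i\leq b$; combining this with $0\to Z_0\to T_0\to B_1\to 0$ gives $Z_0\in\mathcal{J}$. Symmetrically, at the bottom degree, $Z_a=0$ forces $B_{a+1}\cong T_a\in\mathcal{J}$, and ascending yields $B_i\in\mathcal{J}$ for $a<i\leq 0$; in particular $B_0\in\mathcal{J}$. Feeding $B_0$ and $Z_0$ into $0\to B_0\to Z_0\to M\to 0$ and invoking the 2/3-property one last time gives $M\in\mathcal{J}$. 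No genuine obstacle arises here; the argument is a standard propagation through a bounded complex, and the only subtle point is to apply the 2/3-property to the cycles and boundaries rather than directly to the terms $T_i$ themselves.
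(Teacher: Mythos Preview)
Your proposal is correct and follows essentially the same approach as the paper: both arguments first observe that $C_\mathrm{min}(T)=T$ for $T\in\Tilt(U)$ to get $\mathcal{I}\subseteq\langle\mathcal{I}\rangle$, and then propagate membership in $\mathcal{J}$ through the short exact sequences of kernels/images in the bounded complex $C_\mathrm{min}(M)$ via the 2/3-property, working inward from both ends to reach $\ker(d_0)$ and $\im(d_{-1})$ and hence $M$. The only difference is cosmetic---you introduce the $Z_i,B_i$ notation explicitly, while the paper phrases everything in terms of $\ker(d_i)$.
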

\begin{proof}
	The inclusion $\mathcal{I} \subseteq \langle \mathcal{I} \rangle$ follows from the fact that for every tilting $U$-module $M$, we have $C_\mathrm{min}(M)=M$, viewed as a complex concentrated in degree zero; see \cite[Remark 2.11]{GruberMinimalTilting}.
	Now let $\mathcal{J}$ be a thick tensor ideal with the 2/3-property in $\Rep(U)$ such that $\mathcal{I} \subseteq \mathcal{J}$, and let $M$ be a $U$-module in $\langle \mathcal{I} \rangle$.
	We claim that $M$ belongs to $\mathcal{J}$.
	Writing $C_\mathrm{min}(M)$ as
	\[ \cdots \xrightarrow{~d_{-2}~} T_{-1} \xrightarrow{~d_{-1}~} T_0 \xrightarrow{~\,d_0\,~} T_1 \xrightarrow{~\,d_1\,~} \cdots , \]
	we have $M \cong \ker(d_0) / \im(d_{-1})$, so there is a short exact sequence
	\[ 0 \longrightarrow \im(d_{-1}) \longrightarrow \ker(d_0) \longrightarrow M \longrightarrow 0 . \]
	As $\mathcal{J}$ has the 2/3-property, it suffices to show that $\im(d_{-1})$ and $\ker(d_0)$ belong to $\mathcal{J}$.
	As $C_\mathrm{min}(M)$ is exact in all degrees except zero, there are short exact sequences
	\[ 0 \longrightarrow \ker(d_i) \longrightarrow T_i \longrightarrow \ker(d_{i+1}) \longrightarrow 0 \]
	for all $i\geq 0$, where $T_i\in \mathcal{I} \subseteq \mathcal{J}$, and using the 2/3-property, we see that $\ker(d_i)$ belongs to $\mathcal{J}$ if and only if $\ker(d_{i+1})$ belongs to $\mathcal{J}$. Now $C_\mathrm{min}(M)$ is bounded, so $T_i=0$ for some $i>0$, and we conclude that $\ker(d_0)$ belongs to $\mathcal{J}$.
	Analogously, we see that $\im(d_{-1})$ belongs to $\mathcal{J}$, and the claim follows.
\end{proof}

For a thick tensor ideal $\mathcal{J}$ in $\Rep(U)$, it is straightforward to see that $\mathcal{J}\cap \Tilt(U)$ (the set of tilting modules in $\mathcal{J}$) is a thick tensor ideal in $\Tilt(U)$.
The next result shows that the map $\mathcal{J} \mapsto \mathcal{J}\cap \Tilt(U)$ is a section to the map $\mathcal{I} \mapsto \langle \mathcal{I} \rangle$ from the set of thick tensor ideals in $\Tilt(U)$ to the set of thick tensor ideals in $\Rep(U)$ with the 2/3-property.

\begin{Lemma} \label{lem:extensionoftensoridealsection}
	Let $\mathcal{I}$ be a thick tensor ideal in $\Tilt(U)$. Then $\langle \mathcal{I} \rangle \cap \Tilt(U) = \mathcal{I}$.
\end{Lemma}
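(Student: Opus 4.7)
The plan is to observe that both inclusions are essentially immediate consequences of the fact that the minimal tilting complex of a tilting module is the module itself, placed in homological degree zero. This is recorded in \cite[Remark 2.11]{GruberMinimalTilting} and was already invoked at the start of the proof of Lemma \ref{lem:tensoridealgenerated}.

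For the inclusion $\mathcal{I} \subseteq \langle \mathcal{I} \rangle \cap \Tilt(U)$, I would take $M \in \mathcal{I}$. Since $\mathcal{I} \subseteq \Tilt(U)$, the module $M$ is tilting, so $C_\mathrm{min}(M)$ is concentrated in degree zero with unique term $M$. That single term lies in $\mathcal{I}$, so $M \in \langle \mathcal{I} \rangle$, and clearly $M \in \Tilt(U)$.

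For the reverse inclusion $\langle \mathcal{I} \rangle \cap \Tilt(U) \subseteq \mathcal{I}$, I would start from $M \in \langle \mathcal{I} \rangle \cap \Tilt(U)$. Because $M$ is tilting, $C_\mathrm{min}(M)$ again reduces to $M$ sitting in degree zero. By definition of $\langle \mathcal{I} \rangle$, every term of $C_\mathrm{min}(M)$ belongs to $\mathcal{I}$, and so $M \in \mathcal{I}$.

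There is no real obstacle here; the lemma is a direct consequence of the normalization property of $C_\mathrm{min}$ on objects of $\Tilt(U)$, and the proof is essentially one sentence in each direction. The content of the bijection statement therefore lies elsewhere (in Lemma \ref{lem:twooutofthreeproperty} and the forthcoming surjectivity argument), not in this compatibility check.
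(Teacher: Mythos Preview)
Your proof is correct and follows exactly the same idea as the paper's: both rely on the fact that $C_\mathrm{min}(M) = M$ (in degree zero) for tilting $M$, which immediately gives the equivalence $M \in \mathcal{I} \Leftrightarrow M \in \langle \mathcal{I} \rangle$ for tilting modules. The paper merely compresses your two inclusions into a single ``if and only if'' sentence.
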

\begin{proof}
	For a tilting $U$-module $M$, we have $C_\mathrm{min}(M) = M$, viewed as a complex concentrated in degree zero, and it follows that $M$ belongs to $\mathcal{I}$ if and only if all terms of $C_\mathrm{min}(M)$ belong to $\mathcal{I}$.
\end{proof}

Following \cite{Balmerspectrum} and \cite{BoeKujawaNakano}, let us call a proper thick tensor ideal $\mathcal{P}$ in $\Rep(U)$ a \emph{prime ideal} if $M \otimes N \in \mathcal{P}$ implies that either $M \in \mathcal{P}$ or $N \in \mathcal{P}$, for $U$-modules $M$ and $N$.
The following result has been proven by P.\ Balmer in the framework of tensor triangulated geometry in \cite[Lemma 4.2]{Balmerspectrum}.
The proof in our setting is almost identical; we sketch the main ideas below and refer the reader to \cite{Balmerspectrum} for more details.

\begin{Proposition} \label{prop:intersectionofprimeideals}
	Let $\mathcal{J}$ be a proper thick tensor ideal with the 2/3-property in $\Rep(U)$.
	Then we have
	\[ \mathcal{J} = \bigcap_{\mathcal{J} \subseteq \mathcal{P}} \mathcal{P} , \]
	where the intersection runs over all prime thick tensor ideals $\mathcal{P}$ in $\Rep(U)$ such that $\mathcal{P}$ has the 2/3-property and $\mathcal{J} \subseteq \mathcal{P}$.
\end{Proposition}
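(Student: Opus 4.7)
The plan is to adapt the standard Zorn's-lemma argument from tensor-triangular geometry (\cite[Lemma 4.2]{Balmerspectrum}) to the abelian setting with the 2/3-property. The inclusion $\mathcal{J} \subseteq \bigcap \mathcal{P}$ is immediate from the definitions, so the real content lies in the reverse inclusion. For this it suffices to show that for every $U$-module $M \notin \mathcal{J}$, there exists a prime thick tensor ideal $\mathcal{P}$ with the 2/3-property such that $\mathcal{J} \subseteq \mathcal{P}$ and $M \notin \mathcal{P}$.

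First I would set up Zorn's lemma. Consider the collection $\mathcal{S}$ of thick tensor ideals in $\Rep(U)$ with the 2/3-property that contain $\mathcal{J}$ and are disjoint from $\mathcal{M} \coloneqq \{M^{\otimes n} \mid n \geq 1\}$. The collection $\mathcal{S}$ is non-empty: by rigidity of $\Rep(U)$, the module $M$ is a retract of $M^{\otimes n} \otimes (M^*)^{\otimes (n-1)}$, so $M^{\otimes n} \in \mathcal{J}$ would force $M \in \mathcal{J}$, contradicting the choice of $M$; hence $\mathcal{J}$ itself lies in $\mathcal{S}$. A union of a chain in $\mathcal{S}$ is again in $\mathcal{S}$, since every direct sum, retract, tensor product and short exact sequence involves only finitely many modules and therefore lives inside some member of the chain. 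Zorn's lemma then yields a maximal element $\mathcal{P} \in \mathcal{S}$, which is automatically proper because $M \notin \mathcal{P}$.

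The next step is to verify that $\mathcal{P}$ is prime. Suppose for contradiction that $A \otimes B \in \mathcal{P}$ while $A, B \notin \mathcal{P}$. Let $\langle \mathcal{P}, A \rangle$ denote the smallest thick tensor ideal in $\Rep(U)$ with the 2/3-property containing $\mathcal{P}$ and $A$, and likewise $\langle \mathcal{P}, B \rangle$. By maximality of $\mathcal{P}$ in $\mathcal{S}$, neither of these ideals lies in $\mathcal{S}$, so there are $n, m \geq 1$ with $M^{\otimes n} \in \langle \mathcal{P}, A \rangle$ and $M^{\otimes m} \in \langle \mathcal{P}, B \rangle$. Combining this with the inclusion
\[ \langle \mathcal{P}, A \rangle \otimes \langle \mathcal{P}, B \rangle \subseteq \langle \mathcal{P}, A \otimes B \rangle = \mathcal{P} \]
(the equality holding because $A \otimes B \in \mathcal{P}$) would give $M^{\otimes (n+m)} = M^{\otimes n} \otimes M^{\otimes m} \in \mathcal{P}$, contradicting $\mathcal{P} \in \mathcal{S}$.

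The displayed inclusion is the main technical ingredient and the step I expect to be the only real obstacle. The plan is a two-variable swap argument: fixing $Y \in \langle \mathcal{P}, B \rangle$, the collection $\{X \in \Rep(U) \mid X \otimes Y \in \langle \mathcal{P}, A \otimes B \rangle\}$ is itself a thick tensor ideal with the 2/3-property (which crucially uses that tensoring with a fixed module is exact and distributes over direct sums, so both the tensor-ideal closure and the 2/3-closure transfer through the functor $(-)\otimes Y$) and it contains $\mathcal{P}$; to see it also contains $A$, one runs the symmetric argument with $A$ fixed, showing that $\{Y \in \Rep(U) \mid A \otimes Y \in \langle \mathcal{P}, A \otimes B \rangle\}$ is a thick tensor ideal with the 2/3-property containing $\mathcal{P}$ and $B$, hence all of $\langle \mathcal{P}, B \rangle$. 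The careful bookkeeping required for these closure checks, rather than any conceptual difficulty, is the main hurdle.
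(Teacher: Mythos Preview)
Your proposal is correct and follows essentially the same approach as the paper: the paper's proof is only a sketch that cites \cite[Lemma~2.2, Remark~4.3, Proposition~4.4]{Balmerspectrum} for the Zorn's-lemma argument and the primality of the maximal ideal, whereas you have simply unpacked those details (the rigidity trick to show $\mathcal{J}$ avoids all tensor powers of $M$, the chain argument, and the two-variable swap showing $\langle \mathcal{P},A\rangle \otimes \langle \mathcal{P},B\rangle \subseteq \mathcal{P}$). The only cosmetic difference is that the paper includes $n=0$ in the multiplicative set $\{M^{\otimes n}\}$, which is harmless.
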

\begin{proof}
	It is clear that $\mathcal{J}$ is contained in $\bigcap_{\mathcal{J} \subseteq \mathcal{P}} \mathcal{P}$.
	Now let $M$ be a $U$-module that does not belong to $\mathcal{J}$ and consider the set $\mathcal{S} = \{ M^{\otimes n} \mid n \in \Z_{\geq 0} \}$.
	Since $M$ is a direct summand of $M \otimes M^* \otimes M$ (where $M^*$ denotes the dual of $M$) and $\mathcal{J}$ is a thick tensor ideal, the set $\mathcal{S}$ is disjoint from $\mathcal{J}$; cf.\ Remark 4.3 and Proposition 4.4 in \cite{Balmerspectrum}.
	As in Lemma 2.2 in \cite{Balmerspectrum}, one can use Zorn's lemma to prove that there exists a thick prime tensor ideal $\mathcal{P}$ in $\Rep(U)$ such that $\mathcal{P}$ has the 2/3-property, $\mathcal{J} \subseteq \mathcal{P}$ and $\mathcal{S} \cap \mathcal{P} = \varnothing$.
	In particular, $M$ does not belong to $\bigcap_{\mathcal{J} \subseteq \mathcal{P}} \mathcal{P}$ and the claim follows.
\end{proof}

Before we can prove our main result, we need two more preliminary lemmas.

\begin{Lemma} \label{lem:intersection}
	Let $(\mathcal{I}_j)_{j \in J}$ be a collection of thick tensor ideals in $\Tilt(U)$.
	Then we have
	\[ \bigcap_{ j \in J } \langle \mathcal{I}_j \rangle = \Big\langle \bigcap_{ j \in J } \mathcal{I}_j \Big\rangle . \]
\end{Lemma}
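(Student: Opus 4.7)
The plan is to prove the equality by a direct unwinding of the definition of $\langle \mathcal{I} \rangle$, since the condition defining membership is a pointwise condition on the terms of the minimal tilting complex.

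First, I would note as a preliminary that $\bigcap_{j\in J} \mathcal{I}_j$ is itself a thick tensor ideal in $\Tilt(U)$: closure under direct sums, retracts, and tensor products with arbitrary tilting modules is preserved by arbitrary intersections, because each of these closure properties is stated as a universally quantified implication. Hence the right-hand side $\langle \bigcap_{j\in J} \mathcal{I}_j \rangle$ is well-defined as a set of $U$-modules.

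Next, the core of the proof is the following chain of equivalences, for a $U$-module $M$:
\[
M \in \bigcap_{j\in J} \langle \mathcal{I}_j \rangle
\iff \forall j \in J \colon M \in \langle \mathcal{I}_j \rangle
\iff \forall j \in J, \forall i \in \Z \colon C_\mathrm{min}(M)_i \in \mathcal{I}_j.
\]
Swapping the two universal quantifiers, this is equivalent to
\[
\forall i \in \Z \colon C_\mathrm{min}(M)_i \in \bigcap_{j \in J} \mathcal{I}_j,
\]
which by definition is exactly $M \in \langle \bigcap_{j \in J} \mathcal{I}_j \rangle$.

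There is no real obstacle here; the lemma is essentially a definitional tautology once one observes that membership in $\langle \mathcal{I} \rangle$ is defined termwise on $C_\mathrm{min}(M)$, so that intersections on the inside and outside of the brackets commute. The only substantive point is the preliminary remark that arbitrary intersections of thick tensor ideals in $\Tilt(U)$ are again thick tensor ideals, which makes the right-hand side meaningful.
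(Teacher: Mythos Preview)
Your proof is correct and follows essentially the same route as the paper's own argument: both unwind the definition of $\langle \mathcal{I} \rangle$ termwise on $C_\mathrm{min}(M)$ and use that membership of $C_\mathrm{min}(M)_i$ in $\bigcap_j \mathcal{I}_j$ is equivalent to membership in each $\mathcal{I}_j$. Your additional preliminary remark that $\bigcap_j \mathcal{I}_j$ is again a thick tensor ideal is a worthwhile clarification that the paper leaves implicit.
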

\begin{proof}
	For a $U$-module $M$ and for $i \in \Z$, the tilting module $C_\mathrm{min}(M)_i$ belongs to $\bigcap_{ j \in J } \mathcal{I}_j$ if and only if it belongs to $\mathcal{I}_j$ for all $j \in J$.
	Hence $M$ belongs to $\big\langle \bigcap_{ j \in J } \mathcal{I}_j \big\rangle$ if and only if $M$ belongs to $\bigcap_{ j \in J } \langle \mathcal{I}_j \rangle$.
\end{proof}

All the results so far are valid without any additional assumptions on the order $\ell$ of $\zeta$, apart from those that were made at the beginning of Section \ref{sec:introduction}.
For the rest of the section, we assume that $\ell>h$, the Coxeter number of $\mathfrak{g}$.

\begin{Lemma} \label{lem:mapsurjective}
	Let $\mathcal{J}$ be a thick tensor ideal in $\Rep(U)$ with the 2/3-property.
	Then we have
	\[ \mathcal{J} = \Big\langle \bigcap_{\mathcal{J} \subseteq \mathcal{P}} \mathcal{P} \cap \Tilt(U) \Big\rangle , \]
	where the intersection runs over all prime thick tensor ideals $\mathcal{P}$ in $\Rep(U)$ such that $\mathcal{P}$ has the 2/3-property and $\mathcal{J} \subseteq \mathcal{P}$.
\end{Lemma}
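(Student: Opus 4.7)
The plan is to reduce the stated equality to the assertion $\mathcal{P} = \langle \mathcal{P} \cap \Tilt(U) \rangle$ for each prime thick tensor ideal appearing in the intersection, and then to invoke the classification of Boe-Kujawa-Nakano. We may assume $\mathcal{J}$ is proper, since otherwise the statement is trivial and the intersection is empty. Applying Lemma~\ref{lem:intersection} to the family $\{\mathcal{P} \cap \Tilt(U)\}$ of thick tensor ideals in $\Tilt(U)$ yields
\[ \Big\langle \bigcap_{\mathcal{J} \subseteq \mathcal{P}} \mathcal{P} \cap \Tilt(U) \Big\rangle = \bigcap_{\mathcal{J} \subseteq \mathcal{P}} \langle \mathcal{P} \cap \Tilt(U) \rangle . \]
Combined with Proposition~\ref{prop:intersectionofprimeideals}, which expresses $\mathcal{J}$ as $\bigcap_{\mathcal{J} \subseteq \mathcal{P}} \mathcal{P}$, it therefore suffices to prove that $\mathcal{P} = \langle \mathcal{P} \cap \Tilt(U) \rangle$ for every prime thick tensor ideal $\mathcal{P}$ in $\Rep(U)$ with the 2/3-property.

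The inclusion $\langle \mathcal{P} \cap \Tilt(U) \rangle \subseteq \mathcal{P}$ is immediate from Lemma~\ref{lem:tensoridealgenerated}, since $\mathcal{P}$ is itself a thick tensor ideal with the 2/3-property containing $\mathcal{P} \cap \Tilt(U)$. The reverse inclusion $\mathcal{P} \subseteq \langle \mathcal{P} \cap \Tilt(U) \rangle$ is the substantive content; unpacking the definition, it asks that for every $M \in \mathcal{P}$ the tilting modules $C_\mathrm{min}(M)_i$ all lie in $\mathcal{P}$, for $i \in \Z$.

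This last point is where I expect the main obstacle, and it is where the hypothesis $\ell > h$ together with the results of Boe-Kujawa-Nakano come into play. Their classification parametrises the prime thick tensor ideals with the 2/3-property in $\Rep(U)$ by certain closed subvarieties of the nilpotent cone of $\mathfrak{g}$, detected via support varieties: a $U$-module $X$ belongs to $\mathcal{P}$ if and only if its support variety is contained in the subvariety $V(\mathcal{P})$ attached to $\mathcal{P}$. To finish, I would argue that the support variety of every indecomposable tilting summand of $C_\mathrm{min}(M)_i$ is contained in the support variety of $M$; granted this monotonicity, $M \in \mathcal{P}$ forces every $C_\mathrm{min}(M)_i \in \mathcal{P}$. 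The monotonicity itself I would extract either by an inductive analysis of how $C_\mathrm{min}(M)$ is assembled from tilting covers of iterated kernels and cokernels of $M$ (each step producing only summands whose support varieties lie inside $V(M)$), or by matching $\mathcal{P}$ with the thick tensor ideal in $\Tilt(U)$ cut out by $V(\mathcal{P})$ under Ostrik's classification and identifying it with $\mathcal{P} \cap \Tilt(U)$.
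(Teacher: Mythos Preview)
Your reduction is exactly the paper's: apply Lemma~\ref{lem:intersection} and Proposition~\ref{prop:intersectionofprimeideals} to reduce to $\mathcal{P} = \langle \mathcal{P} \cap \Tilt(U) \rangle$ for each prime $\mathcal{P}$. The difference lies in how this last identity is established.

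The paper does not go through support varieties or a monotonicity statement about $C_\mathrm{min}(M)$. Instead it invokes a sharper consequence of Boe--Kujawa--Nakano, namely their Theorem~8.2.1: every prime thick tensor ideal $\mathcal{P}$ with the 2/3-property is \emph{generated} (as a thick tensor ideal with the 2/3-property in $\Rep(U)$) by a single indecomposable tilting module $M$. Letting $\mathcal{I}$ be the thick tensor ideal in $\Tilt(U)$ generated by $M$, Lemma~\ref{lem:tensoridealgenerated} gives $\mathcal{P} = \langle \mathcal{I} \rangle$ immediately (both sides are the smallest 2/3-ideal containing $M$, since $\mathcal{I} \subseteq \mathcal{P}$), and Lemma~\ref{lem:extensionoftensoridealsection} then identifies $\mathcal{I} = \langle \mathcal{I} \rangle \cap \Tilt(U) = \mathcal{P} \cap \Tilt(U)$. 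This is a two-line argument with no analysis of $C_\mathrm{min}$ at all.

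Your first proposed route---showing directly that the support variety of each term $C_\mathrm{min}(M)_i$ is contained in $V(M)$---is not obviously available: in a short exact sequence $0 \to K \to T \to M \to 0$ with $T$ a tilting cover one only gets $V(T) \subseteq V(K) \cup V(M)$, and controlling $V(K)$ inductively is precisely the point at issue. In fact the monotonicity you want is a \emph{consequence} of the identity $\mathcal{P} = \langle \mathcal{P} \cap \Tilt(U) \rangle$ rather than an independent route to it. Your second alternative (matching $\mathcal{P}$ with a tilting ideal via the classifications) is essentially what the paper does, but the clean statement to quote is that each prime is tilting-generated; once you have that, the rest is formal from Lemmas~\ref{lem:tensoridealgenerated} and~\ref{lem:extensionoftensoridealsection}.
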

\begin{proof}
	By Theorem 8.2.1 in \cite{BoeKujawaNakano}, every prime thick tensor ideal $\mathcal{P}$ in $\Rep(U)$ with the 2/3-property is generated by an indecomposable tilting module $M$.
	Hence if $\mathcal{I}$ is the thick tensor ideal in $\Tilt(U)$ generated by $M$ then $\mathcal{P} = \langle \mathcal{I} \rangle$, and by Lemma \ref{lem:extensionoftensoridealsection}, we have $\mathcal{P} \cap \Tilt(U) = \mathcal{I}$ and therefore $\mathcal{P} = \big\langle \mathcal{P} \cap \Tilt(U) \big\rangle$.
	Using Proposition \ref{prop:intersectionofprimeideals} and Lemma \ref{lem:intersection}, it follows that
	\[ \mathcal{J} = \bigcap_{\mathcal{J} \subseteq \mathcal{P}} \mathcal{P} = \bigcap_{\mathcal{J} \subseteq \mathcal{P}} \big\langle \mathcal{P} \cap \Tilt(U) \big\rangle = \Big\langle \bigcap_{\mathcal{J} \subseteq \mathcal{P}} \mathcal{P} \cap \Tilt(U) \Big\rangle , \]
	as required.
\end{proof}

Now we are ready to prove our main result:

\begin{Theorem} \label{thm:bijection}
	The map $\mathcal{I} \mapsto \langle \mathcal{I} \rangle$ from the set of thick tensor ideals in $\Tilt(U)$ to the set of thick tensor ideals with the 2/3-property in $\Rep(U)$ is a bijection.
	The inverse map is given by $\mathcal{J} \mapsto \mathcal{J} \cap \Tilt(U)$.
\end{Theorem}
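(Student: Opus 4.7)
The plan is to show the two maps $\mathcal{I} \mapsto \langle \mathcal{I} \rangle$ and $\mathcal{J} \mapsto \mathcal{J} \cap \Tilt(U)$ are mutually inverse, treating the theorem as a formal consequence of the preliminary lemmas. First I would verify well-definedness of both maps: Lemma \ref{lem:twooutofthreeproperty} says $\langle \mathcal{I} \rangle$ is a thick tensor ideal in $\Rep(U)$ with the 2/3-property, while for any thick tensor ideal $\mathcal{J}$ in $\Rep(U)$ it is immediate that $\mathcal{J} \cap \Tilt(U)$ is a thick tensor ideal in $\Tilt(U)$ (since direct sums, retracts, and tensor products in $\Tilt(U)$ are computed in $\Rep(U)$).

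For one direction of the bijection I would invoke Lemma \ref{lem:extensionoftensoridealsection} directly: it states $\langle \mathcal{I} \rangle \cap \Tilt(U) = \mathcal{I}$ for every thick tensor ideal $\mathcal{I}$ in $\Tilt(U)$. This simultaneously proves injectivity of $\mathcal{I} \mapsto \langle \mathcal{I} \rangle$ and that $\mathcal{J} \mapsto \mathcal{J} \cap \Tilt(U)$ is a left inverse.

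For the other direction, that is, the identity $\langle \mathcal{J} \cap \Tilt(U) \rangle = \mathcal{J}$ for any $\mathcal{J}$ with the 2/3-property, I would start from the formula of Lemma \ref{lem:mapsurjective},
\[ \mathcal{J} = \Big\langle \bigcap_{\mathcal{J} \subseteq \mathcal{P}} \mathcal{P} \cap \Tilt(U) \Big\rangle . \]
The key step is to intersect both sides with $\Tilt(U)$ and apply Lemma \ref{lem:extensionoftensoridealsection} to the right-hand side, which collapses the outer $\langle - \rangle$ and yields the identification
\[ \mathcal{J} \cap \Tilt(U) = \bigcap_{\mathcal{J} \subseteq \mathcal{P}} \mathcal{P} \cap \Tilt(U) . \]
Substituting this back into Lemma \ref{lem:mapsurjective} produces $\mathcal{J} = \langle \mathcal{J} \cap \Tilt(U) \rangle$, which combined with the previous paragraph gives the desired bijection and confirms the stated formula for the inverse.

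The genuine mathematical content has already been distilled into Lemma \ref{lem:mapsurjective}, whose proof leans on the Boe--Kujawa--Nakano classification of prime tensor ideals with the 2/3-property as those generated by a single indecomposable tilting module. At the level of the theorem itself there is no serious obstacle, only the bookkeeping of composing the prior lemmas; the one place where care is needed is recognizing that applying $- \cap \Tilt(U)$ to Lemma \ref{lem:mapsurjective} is legitimate and directly identifies the abstract intersection with the concrete ideal $\mathcal{J} \cap \Tilt(U)$.
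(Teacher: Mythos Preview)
Your proposal is correct and follows essentially the same approach as the paper. The only cosmetic difference is that the paper obtains the identity $\mathcal{J} \cap \Tilt(U) = \bigcap_{\mathcal{J} \subseteq \mathcal{P}} \mathcal{P} \cap \Tilt(U)$ by invoking Proposition~\ref{prop:intersectionofprimeideals} directly, whereas you obtain it by intersecting the formula of Lemma~\ref{lem:mapsurjective} with $\Tilt(U)$ and applying Lemma~\ref{lem:extensionoftensoridealsection}; both routes then feed this identity back into Lemma~\ref{lem:mapsurjective} in the same way.
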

\begin{proof}
	For any thick tensor ideal $\mathcal{I}$ in $\Tilt(U)$, we have $\langle \mathcal{I} \rangle \cap \Tilt(U) = \mathcal{I}$ by Lemma \ref{lem:extensionoftensoridealsection}.
	Conversely, for a thick tensor ideal $\mathcal{J}$ in $\Rep(U)$ with the 2/3-property, we have
	\[ \mathcal{J} \cap \Tilt(U) = \Big( \bigcap_{ \mathcal{J} \subseteq \mathcal{P} } \mathcal{P} \Big) \cap \Tilt(U) = \bigcap_{ \mathcal{J} \subseteq \mathcal{P} } \mathcal{P} \cap \Tilt(U) \]
	by Proposition \ref{prop:intersectionofprimeideals}, and therefore
	\[ \big\langle \mathcal{J} \cap \Tilt(U) \big\rangle = \Big\langle \bigcap_{ \mathcal{J} \subseteq \mathcal{P} } \mathcal{P} \cap \Tilt(U) \Big\rangle = \mathcal{J} \]
	by Lemma \ref{lem:mapsurjective}.
\end{proof}

\begin{Remark}
	The existence of a bijection between the set of thick tensor ideals in $\Tilt(U)$ and the set of thick tensor ideals in $\Rep(U)$ with the 2/3-property can also be deduced by combining Corollary 7.7.2 and Theorem 8.1.1 in \cite{BoeKujawaNakano}.
	However, it is not clear from the results in \cite{BoeKujawaNakano} that this bijection can be described in terms of minimal tilting complexes, as we have done here.
\end{Remark}

\section{The modular case} \label{sec:modular}

Let $G$ be a simply connected simple algebraic group over an algebraically closed field $\kk$ of characteristic $p>0$, with the same root system as $\mathfrak{g}$.
Then the category $\Rep(G)$ of finite-dimensional rational $G$-modules is a highest weight category with weight poset $(X^+,\leq)$, and we write $L_\lambda$ and $T_\lambda$ for the simple $G$-module and the indecomposable tilting $G$-module of highest weight $\lambda \in X^+$, respectively.
As before, we write $\Tilt(G)$ for the full subcategory of tilting modules in $\Rep(G)$ and we simply refer to the objects of $\Rep(G)$ as $G$-modules.

We can mimic the construction from Section \ref{sec:tensoridealsquantum} to define a map $\mathcal{I} \mapsto \langle \mathcal{I} \rangle$ from the set of thick tensor ideals in $\Tilt(G)$ to the set of thick tensor ideals in $\Rep(G)$ with the 2/3-property, and all the results from the preceding sections up to (including) Lemma \ref{lem:intersection} carry over to this setting verbatim.
Below, we explain why Theorem \ref{thm:bijection} fails when we replace $\Rep(U)$ by $\Rep(G)$.
We first introduce some more notation.

Let $\Phi$ be the root system of $G$ and fix a base $\Pi$ of $\Phi$ corresponding to a positive system $\Phi^+$.
Further let $W$ be the Weyl group of $G$ and let $X$ and $X^+$ be the weight lattice of $G$ and the set of dominant weights with respect to $\Phi^+$.
A dominant weight is called \emph{$p$-restricted} if it belongs to
\[ X_1 = \{ \lambda \in X^+ \mid (\lambda,\alpha^\vee) < p \text{ for all } \alpha \in \Pi \} , \]
where $(- \,,-)$ is a $W$-invariant inner product on $X \otimes_\Z \R$ and $\alpha^\vee = \frac{ 2\alpha }{ (\alpha,\alpha) }$ denotes the coroot corresponding to $\alpha\in\Phi$, and we say that $\lambda \in X$ is \emph{$p$-regular} if $(\lambda+\rho,\beta^\vee)$ is not divisible by $p$ for any $\beta \in \Phi$, where $\rho$ is the half-sum of all positive roots.
For all $\lambda \in X^+$, we can uniquely write $\lambda = \lambda_0 + p \lambda_1$ with $\lambda_0 \in X_1$ and $\lambda_1 \in X^+$, and by Steinberg's tensor product theorem (see \cite[Proposition 3.16]{Jantzen}), we have $L_\lambda \cong L_{\lambda_0} \otimes L_{\lambda_1}^{[1]}$, where $M^{[1]}$ denotes the pullback of a $G$-module $M$ along the Frobenius morphism $\mathrm{Fr} \colon G \to G$.
For $r>0$, let us further denote by $G_r$ the \emph{$r$-th Frobenius kernel} of $G$ (i.e.\ the scheme theoretic kernel of the $r$-th power of $\mathrm{Fr}$), as in Section II.3.1 in \cite{Jantzen}.
The \emph{complexity over $G_r$} of a $G$-module $M$ is the dimension $c_{G_r}(M) = \dim V_{G_r}(M)$ of its support variety over $G_r$.
We do not recall any details of the definition here and instead refer the reader to Section 2 in \cite{NakanoParshallVella}.
Some important properties of the complexity of $G$-modules are listed below.

\begin{Lemma}
	Let $r>0$ and let $M$ and $N$ be $G$-modules.
	Then we have
	\begin{enumerate}
	\item $c_{G_r}( M \oplus N ) = \max\{ c_{G_r}(M) , c_{G_r}(N) \}$;
	\item $c_{G_r}( M \otimes N ) \leq \min\{ c_{G_r}(M) , c_{G_r}(N) \}$.
	\item For a short exact sequence $0 \to M_1 \to M_2 \to M_3 \to 0$ in $\Rep(G)$ and a permutation $\sigma$ of the set $\{1,2,3\}$, we have $c_{G_r}( M_{\sigma(1)} ) \leq \max\{ c_{G_r}( M_{\sigma(2)} ) , c_{G_r}( M_{\sigma(3)} ) \}$.
	\end{enumerate}
\end{Lemma}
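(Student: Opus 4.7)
The plan is to derive all three assertions from well-known analogous properties of the support variety $V_{G_r}(M)$ itself, taking dimensions at the end; by definition $c_{G_r}(M) = \dim V_{G_r}(M)$, so every statement about maxima or minima of complexities should just reflect a statement about unions, intersections, or containments of the underlying varieties. All three of the needed support variety facts are standard and collected, for example, in Section~2 of \cite{NakanoParshallVella} (with original sources in work of Friedlander--Parshall and Suslin--Friedlander--Bendel via the one-parameter subgroup description of $V_{G_r}$).

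For part (1), I would invoke the identity $V_{G_r}(M \oplus N) = V_{G_r}(M) \cup V_{G_r}(N)$ and use that the dimension of a finite union of closed subvarieties equals the maximum of the dimensions of its components. For part (2), the inclusion
\[ V_{G_r}(M \otimes N) \subseteq V_{G_r}(M) \cap V_{G_r}(N) \]
(which follows because the restriction of $M \otimes N$ to any infinitesimal one-parameter subgroup is projective as soon as the restriction of either factor is) delivers the required upper bound upon passing to dimensions. For part (3), the long exact sequence in $G_r$-cohomology associated to $0 \to M_1 \to M_2 \to M_3 \to 0$ yields the usual ``two-out-of-three'' containment
\[ V_{G_r}(M_{\sigma(1)}) \subseteq V_{G_r}(M_{\sigma(2)}) \cup V_{G_r}(M_{\sigma(3)}) \]
for every permutation $\sigma$ of $\{1,2,3\}$, and taking dimensions again gives the claim.

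The hard part, such as it is, is purely editorial: none of the three inputs requires any new argument, so the only real question is whether to state them as citations or to briefly recall the one-parameter-subgroup and long-exact-sequence derivations. I expect the author simply to cite \cite{NakanoParshallVella}, since this lemma is used only as a tool for the modular obstruction constructed later in Section~\ref{sec:modular} and not revisited in its own right.
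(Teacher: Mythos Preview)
Your proposal is correct and matches the paper's approach exactly: the author's proof is the one-line citation ``This follows from the properties of support varieties listed in (2.2.4)--(2.2.7) in \cite{NakanoParshallVella},'' precisely as you anticipated. Your unpacking of which support-variety identity underlies each part is accurate and would serve as a fine expanded version of that citation.
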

\begin{proof}
	This follows from the properties of support varieties listed in (2.2.4)--(2.2.7) in \cite{NakanoParshallVella}.
\end{proof}

For $r,c>0$, the preceding lemma implies that the set
\[ \mathcal{J}_{r,\leq c} = \big\{ M \mathop{\big|} M \text{ is a } G \text{-module with } c_{G_r}(M) \leq c \big\} \]
is a thick tensor ideal in $\Rep(G)$ with the 2/3-property.
In the remainder of this section, we prove that the tensor ideal $\mathcal{J}_{2,\leq \abs{\Phi}}$ is not of the form $\langle \mathcal{I} \rangle$ for any thick tensor ideal $\mathcal{I}$ in $\Tilt(G)$ when $p \geq h$.

Suppose from now on that $p \geq h$, and note that $\mathcal{J}_{2,\leq \abs{\Phi}}$ is a proper tensor ideal in $\Rep(G)$ because the complexity of the trivial $G$-module $\kk$ over $G_2$ is the dimension $c_{G_2}(\kk) = \dim G$ of the variety of pairs of commuting nilpotent elements in the Lie algebra $\mathfrak{g}_\kk$ of $G$; see Lemma 1.7 in \cite{SuslinFriedlanderBendel1}, Theorem 5.2 in \cite{SuslinFriedlanderBendel2} and the introduction to \cite{PremetNilpotentCommuting}.

\begin{Lemma} \label{lem:SteinbergFrobeniustwist}
	The simple $G$-module $L_{(p^2-p) \cdot \rho} \cong L_{(p-1) \cdot \rho}^{[1]}$ belongs to $\mathcal{J}_{2,\leq \abs{\Phi}}$. 
\end{Lemma}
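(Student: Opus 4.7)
The strategy is to compute the support variety $V_{G_2}(L_{(p-1)\rho}^{[1]})$ explicitly and observe that its dimension is precisely $\abs{\Phi}$. The starting point is the classical fact that the Steinberg module $L_{(p-1)\rho}$ is both injective and projective as a $G_1$-module (see e.g.\ Proposition II.10.2 in \cite{Jantzen}); in particular $V_{G_1}(L_{(p-1)\rho}) = \{0\}$, so $c_{G_1}(L_{(p-1)\rho}) = 0$.

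The main ingredient is the Suslin--Friedlander--Bendel identification of $V_{G_2}(\kk)$ with the variety of pairs $(x_0,x_1)$ of commuting $[p]$-nilpotent elements of $\mathfrak{g}_\kk$, combined with the standard comparison between the support variety of a $G$-module $N$ and that of its Frobenius twist: for every $G$-module $N$ one has
\[ V_{G_2}\big(N^{[1]}\big) \;=\; \big\{(x_0,x_1) \in V_{G_2}(\kk) \bigm| x_1 \in V_{G_1}(N) \big\}. \]
This identity can be derived either by analysing how the Frobenius morphism acts on infinitesimal one-parameter subgroups (Frobenius shifts the components of such a one-parameter subgroup by one step, so only $x_1$ detects the action on $N^{[1]}$), or via the Lyndon--Hochschild--Serre spectral sequence for $1 \to G_1 \to G_2 \to G_1^{(1)} \to 1$ together with the observation that $G_1$ acts trivially on $N^{[1]}$; compare the computations in \cite{SuslinFriedlanderBendel2} and Section~2 of \cite{NakanoParshallVella}. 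Specialising this formula to $N = L_{(p-1)\rho}$ and using $V_{G_1}(L_{(p-1)\rho}) = \{0\}$ yields an isomorphism of varieties $V_{G_2}(L_{(p-1)\rho}^{[1]}) \cong \mathcal{N}_p(\mathfrak{g}_\kk)$, the $[p]$-restricted nullcone of $\mathfrak{g}_\kk$.

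To finish, I would invoke the standing hypothesis $p \geq h$: under this assumption every nilpotent element of $\mathfrak{g}_\kk$ satisfies $x^{[p]} = 0$, so $\mathcal{N}_p(\mathfrak{g}_\kk)$ coincides with the full nilpotent cone $\mathcal{N}(\mathfrak{g}_\kk)$, whose dimension is $\dim G - \mathrm{rk}(G) = \abs{\Phi}$. Hence $c_{G_2}(L_{(p-1)\rho}^{[1]}) = \abs{\Phi}$, and therefore $L_{(p-1)\rho}^{[1]} \in \mathcal{J}_{2,\leq \abs{\Phi}}$ as claimed. The main delicate point is pinning down the Frobenius-twist formula for support varieties with an adequate reference (or supplying a short self-contained derivation); once this is in place the remainder of the argument is just a dimension count based on well-known facts about the nilpotent cone.
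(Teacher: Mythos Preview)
Your proof is correct and in fact proves a sharper statement: you obtain the exact value $c_{G_2}\big(L_{(p-1)\rho}^{[1]}\big) = \abs{\Phi}$, whereas only the inequality $\leq \abs{\Phi}$ is needed.

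The paper takes a different, shorter route. Rather than computing the support variety explicitly, it quotes Nakano's general upper bound for the complexity over higher Frobenius kernels (Theorem 2.4 in \cite{NakanoBoundComplexity}), which in this situation reads
\[
c_{G_2}\big(L_{(p-1)\rho}^{[1]}\big) \;\leq\; c_{G_1}(\kk) + c_{G_1}\big(L_{(p-1)\rho}\big) \;=\; \abs{\Phi} + 0 .
\]
So the only inputs are $c_{G_1}(\kk) = \dim \mathcal{N} = \abs{\Phi}$ and the projectivity of the Steinberg module over $G_1$, both of which you also invoke. Your argument essentially unpacks the mechanism behind Nakano's bound in this special case via the Suslin--Friedlander--Bendel description and the Frobenius-shift formula for support varieties, giving a more hands-on derivation and the exact dimension. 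The trade-off is that the paper's proof is a two-line citation, while yours requires either a precise reference for the identity $V_{G_2}(N^{[1]}) = \{(x_0,x_1) \in V_{G_2}(\kk) \mid x_1 \in V_{G_1}(N)\}$ or a short spectral-sequence argument, as you yourself flag.
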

\begin{proof}
	By Theorem 2.4 in \cite{NakanoBoundComplexity}, we have
	\[ c_{G_2}( L_{(p-1) \cdot \rho}^{[1]} ) \leq c_{G_1}(\kk) + c_{G_1}( L_{(p-1)\cdot\rho} ) , \]
	where $c_{G_1}(\kk) = \abs{\Phi}$ is the dimension of the nilpotent cone of $G$ and $c_{G_1}(L_{(p-1)\Cdot\rho}) = 0$ because the Steinberg module $L_{(p-1)\Cdot\rho}$ is projective as a $G_1$-module.
	See Sections II.12.14 and II.10.2 in \cite{Jantzen}.
\end{proof}

\begin{Lemma} \label{lem:pregularhighestweight}
	Let $\lambda \in X^+$ be a $p$-regular weight.
	Then there is no proper thick tensor ideal $\mathcal{I}$ in $\Tilt(G)$ such that $L_\lambda$ belongs to $\langle \mathcal{I} \rangle$.
\end{Lemma}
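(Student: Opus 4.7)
I prove the contrapositive: suppose $L_\lambda \in \langle \mathcal{I} \rangle$ for a thick tensor ideal $\mathcal{I}$ in $\Tilt(G)$; I will deduce $\mathcal{I} = \Tilt(G)$, contradicting the assumption that $\mathcal{I}$ is proper. Since $\lambda$ is $p$-regular and $p \geq h$, the dot-orbit $W_p \cdot \lambda$ contains a unique weight $\mu_0 \in X^+$ lying strictly inside the fundamental $p$-alcove, i.e., with $0 < (\mu_0 + \rho, \beta^\vee) < p$ for all $\beta \in \Phi^+$. For this $\mu_0$ the Weyl module, the simple module, and the indecomposable tilting module of highest weight $\mu_0$ all coincide, and Weyl's dimension formula
\[
\dim L_{\mu_0} \;=\; \prod_{\beta \in \Phi^+} \frac{(\mu_0 + \rho, \beta^\vee)}{(\rho, \beta^\vee)}
\]
expresses $\dim L_{\mu_0}$ as a product of positive integers strictly less than $p$; in particular $\dim L_{\mu_0}$ is invertible in $\kk$.

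The heart of the argument is to establish that $L_{\mu_0} \in \langle \mathcal{I} \rangle$. I would proceed by iterated wall-crossings inside the linkage block of $\lambda$: choose a chain $\lambda = \lambda_0 > \lambda_1 > \cdots > \lambda_k = \mu_0$ in the dominance order on $W_p \cdot \lambda \cap X^+$, with each $\lambda_j = s_j \cdot \lambda_{j-1}$ for a simple affine reflection $s_j \in W_p$. At each step the wall-crossing functor $\Theta_{s_j}$ is a composition of tensoring with a $G$-module and a block projection; since $\langle \mathcal{I} \rangle$ is closed under tensor products with arbitrary $G$-modules and under direct summands, we have $\Theta_{s_j}(L_{\lambda_{j-1}}) \in \langle \mathcal{I} \rangle$. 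By the standard structure theory of wall-crossings, the module $\Theta_{s_j}(L_{\lambda_{j-1}})$ is self-dual with head (and socle) equal to $L_{\lambda_{j-1}}$, and $L_{\lambda_j}$ appears as a direct summand of a subquotient of its Loewy filtration. Combining $\Theta_{s_j}(L_{\lambda_{j-1}}), L_{\lambda_{j-1}} \in \langle \mathcal{I} \rangle$ with iterated applications of the 2/3-property to the associated short exact sequences, one peels off the $L_{\lambda_{j-1}}$-layers and obtains $L_{\lambda_j} \in \langle \mathcal{I} \rangle$. Iterating $k$ times yields $L_{\mu_0} \in \langle \mathcal{I} \rangle$, and the modular analogue of Lemma \ref{lem:extensionoftensoridealsection} then gives $T_{\mu_0} = L_{\mu_0} \in \mathcal{I}$.

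Given $T_{\mu_0} \in \mathcal{I}$, the tensor ideal property yields $L_{\mu_0} \otimes L_{\mu_0}^* \in \mathcal{I}$. The composition $\kk \to L_{\mu_0} \otimes L_{\mu_0}^* \to \kk$ of coevaluation and evaluation equals $(\dim L_{\mu_0}) \cdot \id_\kk$, which is an isomorphism since $\dim L_{\mu_0}$ is invertible in $\kk$. Therefore $\kk$ is a direct summand of $L_{\mu_0} \otimes L_{\mu_0}^*$, and so $\kk \in \mathcal{I}$, which forces $\mathcal{I} = \Tilt(G)$.

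The main obstacle is the wall-crossing step: making precise the Loewy structure of $\Theta_{s_j}(L_{\lambda_{j-1}})$ so that the 2/3-extraction of $L_{\lambda_j}$ is rigorous. This relies on standard translation-functor machinery for $G$ under the hypothesis $p \geq h$ (cf.\ Jantzen, Sections II.7--II.8, or Andersen's work on wall-crossings in positive characteristic), but it is the technical heart of the argument.
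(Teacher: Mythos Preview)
Your reduction to finding a tilting module $T_{\mu_0}$ with $\mu_0$ in the fundamental alcove inside $\mathcal{I}$ is sound, and your endgame (the dimension of $L_{\mu_0}$ being prime to $p$, hence $\kk$ splits off $L_{\mu_0}\otimes L_{\mu_0}^*$) is essentially the statement that the negligible tilting modules form the unique maximal proper ideal in $\Tilt(G)$, which the paper also uses. The problem is the middle step.

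The wall-crossing extraction does not go through as stated. After peeling off the head and socle $L_{\lambda_{j-1}}$ from $\Theta_{s_j} L_{\lambda_{j-1}}$ (which the 2/3-property does allow), you obtain the ``heart'' in $\langle\mathcal{I}\rangle$. But the heart is in general \emph{not} semisimple in positive characteristic, and even when it is, it typically has many simple constituents besides $L_{\lambda_j}$. Knowing that $L_{\lambda_j}$ is a composition factor, or a summand of some Loewy layer, is not enough: the 2/3-property lets you pass along short exact sequences only when \emph{two} of the three terms are already in $\langle\mathcal{I}\rangle$, and you have no control over the other simple constituents of the heart. An induction on the dominance order does not rescue this, because the heart of $\Theta_{s} L_{\mu}$ can contain composition factors $L_\nu$ with $\nu$ both above and below $\mu$. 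So the inductive machine never gets off the ground. You flag this step as ``the main obstacle'', but it is more than a technicality: the required control on $\Theta_s L_\mu$ is essentially of Lusztig-conjecture strength, which is false in general for $p\geq h$ and certainly not available for weights as large as $(p^2-p)\rho$.

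The paper avoids translation functors entirely and instead argues directly with $C_\mathrm{min}(L_\lambda)$. Using that the good filtration dimension of $L_\lambda$ equals the alcove distance $d(\lambda)$ from $0$, one knows $C_\mathrm{min}(L_\lambda)_{d(\lambda)}\neq 0$; a bound of the form $\lvert i\rvert \leq d(\lambda)-d(\nu)$ for any $T_\nu$ occurring in degree $i$ (coming from general properties of minimal complexes together with the linkage principle) then forces every indecomposable summand of $C_\mathrm{min}(L_\lambda)_{d(\lambda)}$ to have highest weight in the fundamental alcove. Hence $C_\mathrm{min}(L_\lambda)$ already contains a non-negligible tilting summand, and no proper $\mathcal{I}$ can contain all its terms. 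This route sidesteps any structural information about wall-crossings of simples.
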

\begin{proof}
	By Proposition 12 and the remarks after Proposition 9 in \cite{AndersenCells}, the category $\Tilt(G)$ has a unique maximal thick tensor ideal, whose objects are the direct sums of tilting modules $T_\lambda$ with $(\lambda+\rho,\alpha_0^\vee) \geq p$, where $\alpha_0^\vee$ is the highest coroot.
	(It is called the ideal of \emph{negligible} tilting modules, cf.\ \cite{EtingofOstrikSemisimplification}.)
	Therefore, it suffices to show that there exist $i \in \Z$ and $\mu \in X^+$ with $(\mu+\rho,\alpha_0^\vee)<p$ such that $T_\mu$ is a direct summand of $C_\mathrm{min}(L_\lambda)_i$.
	
	Let us write $d(\lambda)$ for the number of hyperplanes of the form $H_{r,\beta} = \{ x \in X \otimes_\Z \R \mid (x+\rho,\beta^\vee) = rp \}$ that separate $\lambda$ and $0$, for $\beta\in\Phi^+$ and $r>0$.
	Then $d(\lambda)$ equals the \emph{good filtration dimension} of $L_\lambda$ by Corollary 4.5 in \cite{ParkerGFD}, whence $C_\mathrm{min}(L_\lambda)_{d(\lambda)} \neq 0$ by Lemma 2.15 in \cite{GruberMinimalTilting}.
	Using Lemma 1.12 and Proposition 3.3 in \cite{GruberMinimalTilting} and the linkage principle from Section II.7 in \cite{Jantzen}, it is straightforward to see that for all $i \in \Z$ and $\nu \in X^+$ such that $T_\nu$ is a direct summand of $C_\mathrm{min}(L_\lambda)_i$, we have $\abs{i} \leq d(\lambda) - d(\nu)$.
	In particular, any weight $\mu \in X^+$ such that $T_\mu$ is a direct summand of $C_\mathrm{min}(L_\lambda)_{d(\lambda)}$ satisfies $d(\mu) = 0$ and therefore $(\mu+\rho,\alpha_0^\vee) < p$ (again using the linkage principle), as required.
\end{proof}

\begin{Remark}
	More detailed results about the minimal tilting complexes of simple $G$-modules
	are proven in Proposition II.2.6
	in \cite{GruberThesis}; they will be published in a forthcoming article.
\end{Remark}

\begin{Corollary}
	The thick tensor ideal $\mathcal{J}_{2,\leq \abs{\Phi}}$ in $\Rep(G)$ is not of the form $\langle \mathcal{I} \rangle$ for any thick tensor ideal $\mathcal{I}$ in $\Tilt(G)$.
\end{Corollary}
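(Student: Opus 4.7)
The plan is to argue by contradiction, combining the two preceding lemmas. Suppose $\mathcal{J}_{2,\leq\abs{\Phi}} = \langle \mathcal{I} \rangle$ for some thick tensor ideal $\mathcal{I}$ in $\Tilt(G)$. The first observation is that $\mathcal{I}$ must then be a \emph{proper} tensor ideal in $\Tilt(G)$: if we had $\mathcal{I} = \Tilt(G)$ then $\langle \mathcal{I} \rangle$ would trivially equal $\Rep(G)$ (every minimal tilting complex has all terms in $\Tilt(G)$), contradicting the fact recalled just before Lemma \ref{lem:SteinbergFrobeniustwist} that $\kk \notin \mathcal{J}_{2,\leq\abs{\Phi}}$ because $c_{G_2}(\kk) = \dim G > \abs{\Phi}$.

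Next, I would exhibit a single module that makes the two lemmas collide. By Lemma \ref{lem:SteinbergFrobeniustwist}, the simple module $L_{(p^2-p)\cdot\rho}$ lies in $\mathcal{J}_{2,\leq\abs{\Phi}}$, hence in $\langle \mathcal{I} \rangle$ under our hypothesis. I would then verify that the weight $\lambda := (p^2-p)\cdot\rho$ is $p$-regular, so that Lemma \ref{lem:pregularhighestweight} applies. For any $\beta \in \Phi$,
\[ (\lambda + \rho, \beta^\vee) = (p^2 - p + 1)\cdot (\rho, \beta^\vee), \]
and the first factor is congruent to $1$ modulo $p$, while $\abs{(\rho,\beta^\vee)} \leq (\rho,\alpha_0^\vee) = h - 1 < p$ (using $p \geq h$), so the second factor is a nonzero integer of absolute value less than $p$. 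Neither factor is divisible by $p$, so $\lambda$ is $p$-regular.

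Lemma \ref{lem:pregularhighestweight} then asserts that no proper thick tensor ideal $\mathcal{I}$ in $\Tilt(G)$ can satisfy $L_\lambda \in \langle \mathcal{I} \rangle$, contradicting what we deduced above. This completes the argument.

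The whole proof is just a two-line contradiction once the two input lemmas are available; no step is particularly delicate. The only thing to be careful about is the $p$-regularity check, which is the reason one uses $L_{(p-1)\cdot\rho}^{[1]}$ (a $p$-regular weight lying in $\mathcal{J}_{2,\leq\abs{\Phi}}$) rather than, say, the Steinberg module itself, whose weight $(p-1)\cdot\rho$ is highest weight but the argument specifically needs the Frobenius twist so that membership in $\mathcal{J}_{2,\leq\abs{\Phi}}$ is non-trivial.
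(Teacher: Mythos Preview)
Your proof is correct and follows essentially the same approach as the paper: combine Lemma~\ref{lem:SteinbergFrobeniustwist} with Lemma~\ref{lem:pregularhighestweight}, using that $\mathcal{J}_{2,\leq\abs{\Phi}}$ is proper to rule out $\mathcal{I}=\Tilt(G)$. The paper's version is more terse, asserting the $p$-regularity of $(p^2-p)\cdot\rho$ with only a parenthetical reminder that $p\geq h$, whereas you spell out the computation explicitly; otherwise the arguments are identical.
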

\begin{proof}
	By Lemma \ref{lem:SteinbergFrobeniustwist}, the simple $G$-module $L_{(p^2-p) \cdot \rho}$ belongs to $\mathcal{J}_{2,\leq \abs{\Phi}}$, but it does not belong to $\langle \mathcal{I} \rangle$ for any proper thick tensor ideal $\mathcal{I}$ in $\Tilt(G)$ by Lemma \ref{lem:pregularhighestweight} since the weight $(p^2-p) \cdot \rho$ is $p$-regular.
	(Recall that we assume that $p \geq h = ( \rho , \alpha_0^\vee ) + 1$.)
	As $\mathcal{J}_{2,\leq\abs{\Phi}}$ is a proper thick tensor ideal in $\Rep(G)$, it follows that $\mathcal{J}_{2,\leq\abs{\Phi}}$ is not of the form $\langle \mathcal{I} \rangle$ for any thick tensor ideal $\mathcal{I}$ in $\Tilt(G)$.
\end{proof}

\bibliographystyle{alpha}
\bibliography{tensor}

\end{document}